\newcommand{\R}{{\mathbb R}}       % Field of real numbers
\newcommand{\B}{{\mathbb B}}
\newcommand{\HH}{{\mathcal H}}
\newcommand{\cH}{{\mathcal{H}}}
\newcommand{\diam}{{\rm diam}}
\newcommand{\fiproof}{{\hspace*{\fill} $\square$ \vspace{2pt}}}
\newcommand{\rf}[1]{{\eqref{#1}}}
\newcommand{\supp}{\operatorname{supp}}
\newcommand{\vphi}{{\varphi}}
\newcommand{\ve}{{\varepsilon}}
\newcommand{\vv}{{\vspace{2mm}}}
\newcommand{\vvv}{{\vspace{3mm}}}
\newcommand{\wt}[1]{{\widetilde{#1}}}
\newtheorem{theorem}{Theorem}[section]
\newtheorem*{theorem*}{Theorem}
\newtheorem*{lemma*}{Lemma}
\newtheorem*{theorema*}{Theorem A}
\newtheorem*{theoremb*}{Theorem B}
\newtheorem*{theoremc*}{Theorem C}
\newtheorem*{mainlemma*}{Main Lemma}
\newtheorem{lemma}[theorem]{Lemma}
\theoremstyle{definition}
\def\XXint#1#2#3{{\setbox0=\hbox{$#1{#2#3}{\int}$ }
\vcenter{\hbox{$#2#3$ }}\kern-.58\wd0}}
\theoremstyle{remark}
\numberwithin{equation}{section}
\begin{document}

\title{On Tsirelson's theorem about triple points for harmonic measure
}

\author[Tolsa]{Xavier Tolsa}
\address{Xavier Tolsa
\\
ICREA, Passeig Llu\'\i ­s Companys 23 08010 Barcelona, Catalonia, and\\
Departament de Matem\`atiques and BGSMath
\\
Universitat Aut\`onoma de Barcelona
\\
Edifici C Facultat de Ci\`encies
\\
08193 Bellaterra (Barcelona), Catalonia
}
\email{xtolsa@mat.uab.cat}

\thanks{X.T. was supported by the ERC grant 320501 of the European Research Council (FP7/2007-2013), and also partially supported by 2014-SGR-75 (Catalonia), MTM2013-44304-P (Spain), and by the Marie Curie ITN MAnET (FP7-607647). A.V. was partially supported by the NSF grant DMS-160065.
}

\author{Alexander Volberg}

\address{Alexander Volberg
\\
Department of Mathematics
\\
Michigan State University
\\
East Lan\-sing, MI 48824, USA}

\email{volberg@math.msu.edu}

\begin{abstract}
A theorem of Tsirelson from 1997 asserts that given three disjoint domains
in $\R^{n+1}$, the set of triple points belonging to the intersection of the three boundaries where the three corresponding harmonic measures are mutually absolutely continuous has null harmonic measure. The original proof by Tsirelson is based on the fine analysis of filtrations for Brownian and Walsh-Brownian motions and can not
be translated into potential theory arguments. 
In the present paper we give a purely analytical proof of the same result.

\end{abstract}

\maketitle

\section{Introduction}

In a paper from 1997 Tsirelson \cite{Tsirelson}
proved the following result, previously conjectured by Bishop in \cite{Bishop-arkiv} (see also Problem {\bf a} in \cite{EFS2}):

\begin{theorem}\cite{Tsirelson}\label{teo}
Let $\Omega_1,\Omega_2,\Omega_3\subset\R^{n+1}$ be disjoint open connected sets,
with harmonic measures $\omega_1,\omega_2,\omega_3$.
Let $E\subset \partial\Omega_1\cap\partial\Omega_2\cap\partial\Omega_3$ so that
$\omega_1,\omega_2,\omega_3$ are mutually absolutely continuous in $E$.
Then $\omega_i(E)=0$ for $i=1,2,3$.
\end{theorem}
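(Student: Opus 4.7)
The plan is to argue by contradiction. Assume $\omega_1(E)>0$, so by mutual absolute continuity also $\omega_2(E),\omega_3(E)>0$. Passing to a compact subset $F\subset E$ of positive measure, one may assume all six Radon--Nikodym densities $d\omega_i/d\omega_j$ are bounded between two positive constants on $F$. Select a common $\omega_i$-density point $x_0\in F$; then for most small scales $r$ one has $\omega_1(B(x_0,r))\approx\omega_2(B(x_0,r))\approx\omega_3(B(x_0,r))$, and each $\omega_i$ concentrates most of its mass on $B(x_0,r)\cap F$.

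The second step converts this measure-theoretic comparability into pointwise information about Green's functions. Fix poles $p_i\in\Omega_i$ well away from $x_0$ and put $G_i(y)=G_{\Omega_i}(y,p_i)$. Bourgain's classical comparison between harmonic measure and Green's function yields, at interior test points $y_i\in\Omega_i$ with $|y_i-x_0|\approx r$, the two-sided bound $G_i(y_i)\approx r^{1-n}\omega_i(B(x_0,r))$. By the previous step the three $G_i$'s are mutually comparable on parallel test scales near $x_0$. The extensions $u_i=G_i\chi_{\Omega_i}$ are nonnegative continuous subharmonic functions on $\mathbb{R}^{n+1}$ (continuity at $\partial\Omega_i$ follows from Wiener regularity, which holds $\omega_i$-a.e.\ on $\partial\Omega_i$), with pairwise disjoint supports.

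The analytic heart is the three-function Alt--Caffarelli--Friedman monotonicity formula, powered by the Friedland--Hayman inequality on $S^n$. One studies
\[
J(r)=\prod_{i=1}^{3}\frac{1}{r^{2\alpha_i}}\int_{B(x_0,r)}|\nabla u_i(y)|^2\,|y-x_0|^{1-n}\,dy,
\]
where the $\alpha_i$ are the characteristic exponents coming from first Dirichlet eigenvalues of a partition of $S^n$ into three pieces, and shows $J(r)$ is nondecreasing in $r$ and hence bounded above on $(0,R]$. Crucially, for three disjoint caps Friedland--Hayman makes $\sum\alpha_i$ strictly larger than for two (where equality is attained at two complementary hemispheres, matching the sharpness of two-function ACF). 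The Bourgain lower bounds on $G_i$, combined with standard interior gradient estimates $|\nabla G_i(y_i)|\gtrsim G_i(y_i)/r$, translate into lower bounds on each factor in $J$ that, via the strict three-set improvement in $\sum\alpha_i$, force $J(r)\to\infty$ as $r\downarrow 0$, contradicting the upper bound.

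I expect the main obstacle to be extracting and using the \emph{strict} three-set improvement in the Friedland--Hayman exponent at a typical point $x_0\in E$, where the domains $\Omega_i$ may have no geometric regularity whatsoever. Two-function ACF alone cannot rule out the configuration (it is sharp for two half-spaces), so the whole argument hinges on showing that three disjoint angular sections of small spheres around $x_0$ cannot, in the sense of first Dirichlet eigenvalues, degenerate to a two-hemisphere configuration. Pairing this angular sphere-partition input with the measure-theoretic density-point information at $x_0$---without assuming any regularity of $\partial\Omega_i$---is where the real difficulty lies.
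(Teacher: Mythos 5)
Your strategy (three-phase Alt--Caffarelli--Friedman monotonicity plus the strict Friedland--Hayman deficit for three disjoint spherical sets) is a genuinely different route from the paper, which instead runs a blowup argument: it rescales at a point that is simultaneously a Lebesgue point for the densities $h_{i,j}$ and a doubling point for all three measures, gets uniform $L^\infty$ bounds on two of the three rescaled Green functions from Lemmas \ref{l:w>G} and \ref{lembourgain}, extracts a strongly $L^2$-convergent subsequence via Caccioppoli and Rellich--Kondrachov, shows $\omega_1^\infty=\omega_2^\infty$ so that $u_1-u_2$ is a nonzero harmonic function, and then contradicts real-analyticity because $u_1-u_2$ vanishes on the positive-volume limit of $B(\xi,r_k)\setminus(\Omega_1\cup\Omega_2)$. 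However, your outline has genuine gaps and, as written, does not close.

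First, the pointwise input you feed into the ACF functional is not available in general domains. The two-sided bound $G_i(y_i)\approx r^{1-n}\omega_i(B(x_0,r))$ at test points $y_i$ with $|y_i-x_0|\approx r$ presupposes something like a corkscrew point for $\Omega_i$ at scale $r$; without any regularity of $\partial\Omega_i$ only one-sided estimates hold (this is exactly the obstruction discussed in Section 2 of the paper: even the bound of Lemma \ref{l:w>G} can be guaranteed only for two of the three indices at each scale, and the pair may change with $r$). Likewise $|\nabla G_i(y_i)|\gtrsim G_i(y_i)/r$ is not a ``standard interior gradient estimate''; interior estimates give the reverse inequality, and a lower bound on $\int_{B_r}|\nabla u_i|^2|y-x_0|^{1-n}\,dy$ must instead be extracted from the oscillation of $u_i$ (it vanishes on $\Omega_i^c$ and is large somewhere in $B_r$), which needs a separate argument. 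Second, and more seriously, the final contradiction does not follow. Granting the best plausible lower bound $r^{-2}\int_{B_r}|\nabla u_i|^2|y-x_0|^{1-n}\,dy\gtrsim\bigl(\omega_i(B_r)/r^n\bigr)^2$, the monotonicity with deficit $\epsilon=\sum_i\alpha_i-3>0$ only yields $\prod_i\omega_i(B_r)\lesssim r^{3n+\epsilon}$, i.e.\ $\omega_1(B_r)\lesssim r^{n+\epsilon/3}$ at points where the three measures are comparable. This forces $J(r)\to\infty$ only if $\omega_1(B_r)\gg r^{n+\epsilon/3}$, which is false in general: harmonic measure of a wild domain in $\R^{n+1}$ can have dimension strictly larger than $n$, so the lower $(n+\epsilon/3)$-density may vanish $\omega_1$-a.e. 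What you actually get is an upper bound on the dimension of $\omega_1$ restricted to the triple-point set, not $\omega_1(E)=0$; even in the two-phase setting the ACF bound is only a first step and must be supplemented by a blowup/tangent-measure analysis. To complete your approach you would need to convert the decay $\omega_1(B_r)\lesssim r^{n+\epsilon/3}$ into an actual contradiction, and that missing step is the heart of the matter.
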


We remark that the planar case $n=1$ of the preceding theorem had previously
been proved by Bishop \cite{Bishop-arkiv} and Eremenko, Fuglede, and Sodin 
\cite{EFS1}. In the higher dimensional case, another previous partial result had been obtained by Bishop
in \cite{Bishop-arkiv}. Namely he had shown that if $\Omega_1,\ldots,\Omega_m\subset\R^{n+1}$ are disjoint domains
with harmonic measures $\omega_1,\ldots,\omega_m$ which are mutually absolutely continuous in $E\subset 
\bigcap_{j=1}^m\partial\Omega_j$, then $\omega_j(E)=0$ if $m=5$ in $\R^3$, or if $m=11$ in any dimension.

The original proof of Theorem \ref{teo} by Tsirelson in \cite{Tsirelson} is mostly based on
the fine analysis of filtrations for Brownian and Walsh-Brownian motions, and
so this proof can not
be directly translated into a potential theory proof.
In \cite{Tsirelson} Tsirelson asked if the same result can be achieved
by non-stochastic arguments. In the present note  we give a quite short  proof of this theorem by using purely analytical arguments.

It is also worth pointing out that the original formulation of Tsirelson's
theorem in \cite{Tsirelson} is somewhat different from the one above. Indeed, denote by
$\omega_1\wedge\omega_2\wedge\omega_3$ the greatest measure $\mu$ such that
$\mu\leq\omega_i$ for $i=1,2,3$. It is shown in \cite{Tsirelson}
that if $\Omega_1,\Omega_2,\Omega_3\subset\R^{n+1}$ are disjoint open connected sets with harmonic measures $\omega_1,\omega_2,\omega_3$, then
$\omega_1\wedge\omega_2\wedge\omega_3=0$. It is immediate to check that
that this statement is equivalent to the one in Theorem \ref{teo}. In fact, assume that Theorem \ref{teo} holds and let $\mu=\omega_1\wedge\omega_2\wedge\omega_3$. By the Radon-Nikodym theorem there are non-negative functions 
$g_i$ such that $\mu=g_i\,\omega_i$. Letting $E_i=\supp\omega_i\cap\{g_i>0\}$, it follows
that 
$$\mu = \mu|_{E_i} \approx \omega_i|_{E_i} \quad \mbox{for $i=1,2,3,$}$$
where ``$\approx$" denotes mutual absolute continuity. It follows then
that 
$$\mu=\mu|_{E_1\cap E_2\cap E_3} \approx \omega_i|_{E_1\cap E_2\cap E_3}
\quad \mbox{for $i=1,2,3,$}.$$
Now, by Theorem \ref{teo} the mutual absolute continuity of $\omega_1$,
$\omega_2$ and $\omega_3$ in $E_1\cap E_2\cap E_3$ implies that 
$\omega_i(E_1\cap E_2\cap E_3) =0$ and thus $\mu\equiv0$. The converse implication follows by analogous arguments.

Tsirelson's theorem is connected to a
 recent result
by Azzam, Mourgoglou and the first author of this paper. To state this, we need to introduce
the capacity density condition (CDC). A domain $\Omega\subset\R^{n+1}$, $n\geq2$, satisfies the CDC if
there is $R_{\Omega}>0$ and $c_\Omega>0$ so that ${\rm Cap}(B\setminus \Omega)\geq c_\Omega \,r(B)^{n-1}$ for any ball $B$ centered on $\partial\Omega$ of radius $r(B)\in (0,R_{\Omega})$, where ${\rm Cap}$ stands for the Newtonian capacity.
 
\begin{theorem}\label{teo*}\cite{AMT}
For $n\geq 2$, 
 let $\Omega_1\subset \R^{n+1}$ be  open and let $\Omega_2= \bigl(\,\overline{\Omega_1}\,\bigr)^c$. Assume that $\Omega_1,\Omega_2$ are both connected and satisfy the capacity density condition and $\partial\Omega_1 = \partial\Omega_2$.
Let $\omega_1,\omega_2$ be the respective harmonic measures of $\Omega_1,\Omega_2$.
Let $E\subset \partial\Omega_1$ be a Borel set and let $T$ the set of tangent points for $\partial\Omega_1$. Then $\omega_1\perp \omega_2$ on $E$ if and only if $\cH^{n}(E\cap T)=0$. Further, if $\omega_1\ll\omega_2\ll\omega_1$ on $E$, then $E$ contains an $n$-rectifiable subset $F$ upon which $\omega_1$ and
$\omega_2$ are mutually absolutely continuous with respect to $\cH^{n}$.
\end{theorem}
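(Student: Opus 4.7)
The plan is to combine the Alt--Caffarelli--Friedman two-phase monotonicity formula for the Green functions of $\Omega_1,\Omega_2$ with boundary potential-theoretic estimates available under the CDC (boundary Harnack principle, Bourgain/Aikawa-type comparison of harmonic measure and Green function). Fix interior poles $p_i\in\Omega_i$, let $u_i=G_{\Omega_i}(\cdot,p_i)$ extended by zero outside $\Omega_i$, and for $x\in\partial\Omega_1$ and small $r>0$ consider the ACF functional
\[
J(x,r)=\frac{1}{r^{4}}\prod_{i=1,2}\int_{B(x,r)}\frac{|\nabla u_{i}(y)|^{2}}{|y-x|^{n-1}}\,dy,
\]
which is non-decreasing in $r$. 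The guiding principle is an equivalence I would try to establish at each point $x\in\partial\Omega_1$: mutual absolute continuity of $\omega_1,\omega_2$ at the infinitesimal scale around $x$ $\Leftrightarrow$ linear non-degeneracy of both Green functions $u_i$ near $x$ $\Leftrightarrow$ existence of a hyperplane blow-up of $\partial\Omega_1$ at $x$, i.e.\ $x\in T$.

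For the easy direction $\cH^{n}(E\cap T)>0 \Rightarrow \omega_1\not\perp\omega_2$ on $E$, at each $x\in T$ the boundary admits an approximate tangent $n$-plane at all small scales; coupling this with the CDC on both sides and the boundary Harnack principle yields
\[
\omega_{i}(B(x,r))\asymp r^{n}\quad (i=1,2)
\]
for all small $r$. Lebesgue differentiation on $\cH^{n}|_T$ then shows that the densities $d\omega_i/d\cH^{n}|_T$ exist and are positive and finite $\cH^{n}$-a.e., so $\omega_1|_T\approx\omega_2|_T\approx\cH^{n}|_T$. In particular if $\cH^{n}(E\cap T)>0$ then $\omega_1,\omega_2$ are not mutually singular on $E\cap T\subset E$.

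For the harder direction $\cH^{n}(E\cap T)=0 \Rightarrow \omega_1\perp\omega_2$ on $E$ I would argue by contradiction. If mutual singularity fails, a Radon--Nikodym decomposition yields $E'\subset E$ with $\omega_1(E')>0$ and $c\le d\omega_1/d\omega_2\le C$ on $E'$. A Lebesgue-type differentiation, valid for doubling harmonic measures under the CDC, descends comparability to all small scales at $\omega_1$-a.e.\ $x\in E'$; the Green function/harmonic measure comparison then upgrades this to linear non-degeneracy $u_i(y)\asymp\dist(y,\partial\Omega_i)$ on $B(x,r)$ for both $i$ simultaneously. Hence $J(x,0^{+})>0$, and the rigidity case of the ACF monotonicity formula forces any blow-up limit of the pair $(u_1,u_2)$ at $x$ to consist of complementary linear profiles $\alpha(\nu\cdot y)_{\pm}$. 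The blow-up of $\partial\Omega_1$ at $x$ is therefore a hyperplane, i.e.\ $x\in T$. Combined with $\omega_1|_T\ll\cH^{n}|_T$ from the previous step, $\cH^{n}(E\cap T)=0$ gives $\omega_1(E')=0$, a contradiction. The structural statement follows from the same chain of implications: under the hypothesis $\omega_1\ll\omega_2\ll\omega_1$ on $E$, the argument above places $\omega_1$-almost every $x\in E$ in $T$; since $T$ is $n$-rectifiable by the classical approximate-tangent-plane criterion (Federer/Mattila), the set $F:=E\cap T$ is an $n$-rectifiable subset of $E$ and the first-direction estimates give $\omega_1\approx\omega_2\approx\cH^{n}|_F$.

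The main obstacle I anticipate is the non-degeneracy upgrade in the hard direction: propagating scale-by-scale comparability of $\omega_1,\omega_2$ to uniform linear lower bounds on both Green functions relies delicately on the CDC, because without CDC on both sides the complement may be so thin that $u_i$ fails to control $\dist(\cdot,\partial\Omega_i)$ from below and the ACF rigidity cannot be triggered. A secondary technical point is verifying that the equality case of ACF is inherited at the blow-up limit, which requires a uniform normalisation of the $u_i$ and a compactness argument ruling out concentration of the integrals in $J(x,r)$ near the origin.
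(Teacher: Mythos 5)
A point of orientation first: Theorem \ref{teo*} is quoted from \cite{AMT} and is \emph{not} proved in this paper. The authors only remark that the proof in \cite{AMT} is a blowup argument for which the CDC is essential (and that their present techniques later allowed the CDC to be dropped in \cite{AMTV}). So there is no internal proof to compare against; I can only measure your proposal against the known strategy of \cite{KPT} and \cite{AMT}, which it broadly resembles in outline (Alt--Caffarelli--Friedman monotonicity combined with blowups of the Green functions).

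Two of your steps would fail as written. In the ``easy'' direction you assert that at every tangent point $\omega_i(B(x,r))\asymp r^n$ for all small $r$. This is not true with uniform constants: even for a smooth bounded domain the ratio $\omega_i(B(x,r))/r^n$ tends to the Poisson kernel at $x$, which depends on the pole and the global geometry and is not controlled by dimensional constants, and for a general CDC domain one only knows --- after substantial work --- that the density $\lim_{r\to0}\omega_i(B(x,r))/r^n$ exists and is positive and finite $\cH^n$-a.e.\ on $T$. The assertion $\omega_i|_T\approx \cH^n|_T$ is itself a substantive half of the theorem (a higher-dimensional analogue of McMillan's theorem) and cannot be dispatched by the boundary Harnack principle plus Lebesgue differentiation.

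More seriously, the non-degeneracy upgrade in the hard direction is not merely ``delicate,'' as you anticipate; it is false as stated. In a CDC domain the Green function is H\"older continuous up to the boundary but need not satisfy $u_i(y)\gtrsim \dist(y,\partial\Omega_i)$ --- near a convex corner, for instance, it vanishes faster than linearly --- and mutual absolute continuity of the two harmonic measures at a point does not by itself yield a linear lower bound on either Green function. Hence you cannot conclude $J(x,0^+)>0$ along this route. The actual KPT/AMT argument is structured differently: one blows up a combination of the form $u_1-h(x)\,u_2$ with $h=d\omega_2/d\omega_1$, shows that the blowup limits are harmonic polynomials, and uses the ACF functional only to bound their degree; the degenerate alternative $J(x,0^+)=0$, corresponding to blowups of degree at least two, is then shown to occur only on a set of harmonic measure zero. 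Without a substitute for this dichotomy your argument has no mechanism to exclude $J(x,0^+)=0$ on a set of positive measure, and the rigidity case of ACF is never triggered.
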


It would be natural to think that perhaps Tsirelson's theorem may be derived as a corollary of Theorem \ref{teo*} in the case $n\geq2$. 
  As far as we know, this is not the case. The main reason is that the latter theorem requires the CDC. In fact, this 
  assumption, which is not present in
Theorem \ref{teo}, seems to be  an essential
condition for the blowup techniques\footnote{Actually, after the present paper was finished, in a joint of the authors with Azzam
and Mourgoglou \cite{AMTV} it was shown that Theorem \ref{teo*} also holds without the CDC condition.
The main novelty with respect to the arguments in \cite{AMT} is the use of a blowup argument which
is precisely inspired by the techniques in the current paper.}
 used in \cite{AMT}. So our
 arguments below to prove Theorem \ref{teo} are independent of the ones in \cite{AMT}. On the other hand, to tell the truth, we will also use a blowup argument to prove Tsirelson's 
 theorem, which has some similarities (and also some big differences) with other previous blowup arguments, such as the ones by Kenig, Preiss, and Toro in \cite{KPT}, or the ones in \cite{AMT}. See the 
 next section for more details in this direction.
\vvv

Acknowledgement: We are grateful to Misha Sodin for calling our attention to Tsirelson's theorem and the lack of an analytic proof of this in a conference in Bedlewo (Poland) in August 2016.

\vvv

\section{Preliminary discussion}

As usual in harmonic analysis, we denote by $C$ or $c$ constants which usually only depend on the dimension $n$ and
other fixed parameters, and which may change their values at different occurrences. For $a,b\geq 0$, we will write $a\lesssim b$ if there is $C>0$ so that $a\leq Cb$ and $a\lesssim_{t} b$ if the constant $C$ depends on the parameter $t$. We write $a\approx b$ to mean $a\lesssim b\lesssim a$ and define $a\approx_{t}b$ similarly.

In this paper we assume that the harmonic measure any domain of is constructed by Perron's method.

We will prove Tsirelson's theorem by applying a blowup argument, as in other
previous
works such as \cite{KPT} or \cite{AMT}. 
As far as we know, the introduction of blowup arguments (also called renormalisation arguments) in the study of harmonic
measure is basically due to Kenig and Toro (see \cite{Kenig-Toro-annals}). These techniques are frequent in other fields such as calculus of variations, sometimes in combination with monotonicity formulas. See \cite{Caffarelli-Salsa} for a standard reference in this area.
The arguments in \cite{KPT}
assume that the domains involved are non-tangentiallly accessible (NTA), while in \cite{AMT}
the domains satisfy the CDC. In both cases these assumptions imply that the associated Green functions are H\"older
continuous, which is an essential ingredient in the blowup arguments of both works.

So a first idea when trying to implement a blowup argument to prove
Theorem \ref{teo}
might consist in showing that at least two 
of the Green functions of the domains $\Omega_1$, $\Omega_2$, and $\Omega_3$ are H\"older continuous.
To this end, let $E$ be as in Theorem \ref{teo} and let $x\in E$.
 It is clear that for each radius $r>0$ there exists at least
two indices $1\leq i <j\leq 3$ such that 
$$\HH^{n+1}(\Omega_i^c\cap B(x,r))\gtrsim r^{n+1}
\quad \text{and}\quad \HH^{n+1}(\Omega_j^c\cap B(x,r))\gtrsim r^{n+1}.$$
Here $\HH^{n+1}$ stands for the $(n+1)$-Hausdorff measure.
So given $r_0$ and $k_0\geq1$, it follows that there are two indices $1\leq i <j\leq 3$ such that 
$$\HH^{n+1}(\Omega_i^c\cap B(x,2^{-k}r))\gtrsim (2^{-k}r)^{n+1}
\quad \text{and}\quad \HH^{n+1}(\Omega_j^c\cap B(x,2^{-k}r))\gtrsim (2^{-k}r)^{n+1}$$
for at least $k_0/3$ integers $k\in[1,k_0]$. 

From the preceding discussion, by standard arguments
analogous to the ones when the CDC holds (arguing as in Lemma 4.5 of \cite{AMT}, say), we get that for each $x\in E$ 
and $0<r\leq r_0$, there are at least two indices $1\leq i <j\leq 3$ such 
that
$$G_i(p_i,y) \leq C \sup_{z\in B(x,Cr_0)} G_i(p_i,z)\,\left(\frac r{r_0}\right)^\alpha \quad \mbox{
for $y\in B(x,r)$,}$$
for some $\alpha>0$,
and the same replacing $i$ by $j$. Here $G_h(p_h,\cdot)$ stands for the
Green function of $\Omega_h$ with pole $p_h\in\Omega_h$, which we assume
to be deep inside $\Omega_h$.
Note that the precise indices $i,j$ above depend on the particular point $x$, and more important, also on $r$. As far as we know, from the preceding estimate we cannot infer that for each point $x\in E$ there are two indices $h=i,j$
(or even one index $h$) such that $G_h(p_h,\cdot)$ is H\"older continuous at $x$. In fact, by compactness it seems that at most we will get two indices $i,j$
and a sequence of radii $r_k\to 0$ (depending on $x$) so that
\begin{equation}\label{eq1}
G_h(p_h,y) \leq C \sup_{z\in B(x,Cr_0)} G_h(p_h,z)\,\left(\frac {r_k}{r_0}\right)^\alpha \quad \mbox{
for $y\in B(x,r_k)$.}
\end{equation}
Unfortunately, this condition is much weaker than H\"older continuity at $x$ for
$G_i(p_i,\cdot)$ and $G_j(p_j,\cdot)$.

We have not been able to use the condition \rf{eq1} to extend the arguments
in \cite{KPT} or \cite{AMT} to our particular context. Instead, our 
arguments will rely on the strong convergence in $L^2$ of suitable sequences of 
Green functions, which can be derived by applying the Rellich-Kondrachov theorem
in combination with Caccioppoli's inequality, for example.

\vvv

\section{Proof of Tsirelson's theorem}

\subsection{Preliminary lemmas}

We will need the following classical result. See for example \cite[Lemma 3.3]{AHM3TV} for the detailed proof.

\begin{lemma}\label{l:w>G}
Let $n\ge 2$ and $\Omega\subset\R^{n+1}$ be a bounded domain. Denote by $\omega^p$ its harmonic measure with pole at $p$ and by $G$ its Green function.
Let $B=B(x_0,r)$ be a closed ball with $x_0\in\partial \Omega$ and $0<r<\diam(\partial \Omega)$. Then, for all $a>0$,
\begin{equation}\label{eq:Green-lowerbound}
 \omega^{x}(aB)\gtrsim \inf_{z\in 2B\cap \Omega} \omega^{z}(aB)\, r^{n-1}\, G(x,y)\quad\mbox{
 for all $x\in \Omega\backslash  2B$ and $y\in B\cap\Omega$,}
 \end{equation}
 with the implicit constant independent of $a$.
\end{lemma}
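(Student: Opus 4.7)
The plan is a maximum principle argument in the annular region $U := \Omega \setminus \overline{2B}$. Fix $y \in B \cap \Omega$ and set $m := \inf_{z \in 2B \cap \Omega} \omega^z(aB)$; we may assume $m > 0$. Since the pole $y$ of $G(\cdot,y)$ lies in $B$ and hence outside of $U$, both $x \mapsto \omega^x(aB)$ and $x \mapsto G(x,y)$ are harmonic in $U$, so for any constant $c > 0$ the auxiliary function
$$u(x) := \omega^{x}(aB) - c\,m\,r^{n-1}\,G(x,y)$$
is harmonic in $U$. The goal is to choose $c$ so that $u \geq 0$ on $\partial U$ and then invoke the maximum principle.

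On the inner boundary $\partial(2B) \cap \Omega$ I would use the pointwise upper bound for the Green function against the fundamental solution, $G(x,y) \leq C_n\,|x-y|^{1-n}$. Since $|x-x_0|=2r$ and $|y-x_0|\leq r$ imply $|x-y|\geq r$, this gives $G(x,y)\leq C_n\,r^{1-n}$ on $\partial(2B)\cap\Omega$. Choosing $c = 1/C_n$ and recalling the definition of $m$ then yields
$$u(x) \geq m - \tfrac{m}{C_n}\,r^{n-1}\cdot C_n\,r^{1-n} = 0 \quad \text{on } \partial(2B)\cap\Omega.$$

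On the outer part $\partial\Omega \setminus 2B$ I would argue via boundary values. At each regular boundary point $\xi\in\partial\Omega$ the Green function satisfies $G(x,y)\to 0$, while the Perron solution $\omega^{\cdot}(aB)$ attains the boundary value $\chi_{aB\cap\partial\Omega}(\xi)\in\{0,1\}$; either way $\liminf_{x\to\xi}u(x)\geq 0$. The irregular points of $\partial\Omega$ form a polar set, hence are negligible for the generalized maximum principle for Perron solutions. Combining this with the bound on the inner boundary, we conclude that $u \geq 0$ throughout $U$, which is exactly \eqref{eq:Green-lowerbound} with implicit constant $1/C_n$, depending only on $n$.

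The only mildly delicate point is the boundary analysis on $\partial\Omega\setminus 2B$ when $a>2$, where $\omega^x(aB)$ need not vanish as $x\to\partial\Omega$; the observation that saves us is simply that $\omega^x(aB)$ remains nonnegative, so it cannot spoil the sign of $u$. Beyond this the argument is entirely classical: no regularity hypothesis on $\Omega$ beyond boundedness and $r<\diam(\partial\Omega)$ is required, and crucially the constant $C_n$ comes solely from the fundamental solution, so the implicit constant is independent of $a$ as claimed.
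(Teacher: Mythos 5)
Your argument is correct, and it is essentially the standard maximum-principle proof that the paper points to via \cite[Lemma 3.3]{AHM3TV} rather than reproving: one compares $\omega^x(aB)$ with $c\,m\,r^{n-1}G(x,y)$ on $\partial(2B)\cap\Omega$ using $G(x,y)\leq C_n|x-y|^{1-n}\leq C_n r^{1-n}$ there, and disposes of $\partial\Omega$ by the nonnegativity of harmonic measure, the vanishing of $G(\cdot,y)$ at Wiener-regular points, and the polarity of the irregular set. No changes needed.
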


In the preceding statement, $aB$ stands for the ball concentric with $B$ with radius equal to $a$ times the radius of $B$.

The next lemma is usually known as Bourgain's estimate. See \cite{Bourgain}
(or \cite[Lemma 3.4]{AHM3TV} for the precise formulation below).

\begin{lemma}
\label{lembourgain}
There is $\delta_{0}>0$ depending only on $n\geq 1$ so that the following holds for $\delta\in (0,\delta_{0})$. Let $\Omega\subset \R^{n+1}$ be a  bounded domain, $n-1<s\le n+1$,  $\xi \in \partial \Omega$, $r>0$, and $B=B(\xi,r)$. Then 
\[ \omega^{x}(B)\gtrsim_{n,s} \frac{\mathcal H_\infty^{s}(\partial\Omega\cap \delta B)}{(\delta r)^{s}}\quad \mbox{  for all }x\in \delta B\cap \Omega .\]
\end{lemma}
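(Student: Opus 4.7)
The plan is to build a bounded Newtonian-potential barrier supported on $F:=\partial\Omega\cap\delta B$ and then compare it against $\omega^{\cdot}(B)$ via the maximum principle in $\Omega$. Setting $M:=\mathcal H_\infty^{s}(F)$ (which we may assume positive), I would first invoke Frostman's lemma to obtain a positive Borel measure $\mu$ supported on $F$ satisfying $\mu(B(y,t))\le t^{s}$ for every ball and $\mu(F)\gtrsim_{s} M$. The workhorse of the proof is the Newtonian potential
\[
U(x):=\int\frac{d\mu(y)}{|x-y|^{n-1}},
\]
which is nonnegative and superharmonic in $\R^{n+1}$, and harmonic in $\Omega$ because $\supp\mu\subset\partial\Omega$.

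The proof would then rest on three pointwise estimates for $U$: (a) the uniform upper bound $U(x)\le C_{n,s}(\delta r)^{s-n+1}$ on $\R^{n+1}$, obtained from the layer-cake identity $U(x)=(n-1)\int_0^\infty \mu(B(x,t))\,t^{-n}\,dt$ by splitting at $t=\delta r$ and using the growth of $\mu$ (this is precisely where $s>n-1$ enters, to ensure convergence near $t=0$); (b) the refined estimate $U(y)\le 2^{n-1}\mu(F)/r^{n-1}$ for $y\in \partial\Omega\setminus B$, valid since $|y-z|\ge r/2$ for every $z\in F$ once $\delta\le 1/2$; and (c) the lower bound $U(x)\ge \mu(F)/(2\delta r)^{n-1}$ for $x\in \delta B$, coming from $|x-z|\le 2\delta r$ for every $z\in F$.

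Since $U$ is bounded (by (a)) and harmonic in $\Omega$, with boundary values dominated by the two constants above on the respective pieces of $\partial\Omega$, the maximum principle applied to the Perron construction of harmonic measure would yield
\[
U(x)\;\le\; C_{n,s}(\delta r)^{s-n+1}\,\omega^{x}(B)\;+\;\frac{2^{n-1}\mu(F)}{r^{n-1}}\qquad\text{for every }x\in\Omega.
\]
Substituting (c) on the left for $x\in\delta B\cap\Omega$, the last term on the right equals $4^{n-1}\delta^{n-1}$ times $\mu(F)/(2\delta r)^{n-1}$, so it can be absorbed into the left-hand side as soon as $4^{n-1}\delta^{n-1}\le 1/2$, i.e.\ for $\delta\le\delta_0$ with $\delta_0=\delta_0(n)$. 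Solving for $\omega^{x}(B)$ would then give
\[
\omega^{x}(B)\;\gtrsim_{n,s}\;\frac{\mu(F)}{(\delta r)^{s}}\;\gtrsim_{n,s}\;\frac{M}{(\delta r)^{s}},
\]
which is exactly the claimed bound.

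The main point of care is the maximum-principle step: since $U$ need not be continuous up to $\partial\Omega$, one should interpret the comparison as between $U$ and the Perron solution whose boundary data are $C_{n,s}(\delta r)^{s-n+1}$ on $\partial\Omega\cap B$ and $2^{n-1}\mu(F)/r^{n-1}$ on $\partial\Omega\setminus B$. This is legitimate because $U$ is bounded and harmonic in $\Omega$, and Kellogg's theorem lets the pointwise upper bounds on $\partial\Omega$ pass to an $\omega^{x}$-a.e.\ comparison, which suffices to apply the maximum principle for bounded harmonic functions.
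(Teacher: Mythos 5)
The paper does not prove this lemma itself; it cites Bourgain and \cite[Lemma 3.4]{AHM3TV}. Your argument is exactly the standard potential--theoretic proof from those sources: Frostman measure on $\partial\Omega\cap\delta B$, the Riesz/Newtonian potential $U$, the three bounds (global upper bound using $s>n-1$, smallness off $B$, largeness on $\delta B$), comparison with the Perron solution, and absorption of the error term for $\delta$ small. For $n\ge 2$ the estimates all check out: the layer--cake bound gives $U\lesssim_{n,s}(\delta r)^{s-n+1}$ (the tail needs $\mu(F)\le(2\delta r)^s$ together with $\int_{\delta r}^\infty t^{-n}dt\approx(\delta r)^{1-n}$), the absorption ratio is indeed $4^{n-1}\delta^{n-1}$, and the maximum--principle step is legitimate because your bounds (a) and (b) hold on open neighbourhoods of the relevant boundary pieces, so the $\limsup$ of $U$ from inside $\Omega$ is controlled at \emph{every} boundary point, not merely $\omega^x$--a.e. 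One small bookkeeping point: to land on $\omega^x(B)$ for the open ball you should compare against $\chi_{\overline{B(\xi,r/2)}\cap\partial\Omega}$, which your estimate (b) already supports since $U$ is small off $B(\xi,r/2)$.

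The one genuine gap is the case $n=1$, which the statement explicitly includes (``depending only on $n\ge1$'') and which the paper actually uses in its planar subsection. For $n=1$ your kernel $|x-y|^{-(n-1)}\equiv 1$ makes $U$ identically equal to $\mu(F)$: the tail integral $\int_{\delta r}^\infty \mu(F)\,t^{-n}\,dt$ diverges, and the absorption factor $4^{n-1}\delta^{n-1}=1$ can never be pushed below $1/2$, so there is nothing to absorb. The standard fix is to replace $U$ by a truncated logarithmic potential such as $\int \log_+\bigl(r/|x-y|\bigr)\,d\mu(y)$, for which the separation between $\delta B$ and $\partial B$ produces a gain of order $\log(1/\delta)$ in place of $\delta^{n-1}$, and the rest of your scheme goes through verbatim. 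You should either add this case or restrict your proof to $n\ge2$ and cite the planar version separately.
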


Let $\xi\in E$ and $r>0$. 
Suppose that 
\begin{equation}\label{eq300}
\HH^{n+1}(B(\xi,r)\cap \Omega_3) = \max_{1\leq i \leq3}\HH^{n+1}(B(\xi,r)\cap \Omega_i).
\end{equation}
Then
\begin{equation}\label{eq300a}
\HH^{n+1}(B(\xi,r)\cap \Omega_1) + \HH^{n+1}(B(\xi,r)\cap \Omega_2) \leq \frac23 \,\HH^{n+1}(B(\xi,r)).
\end{equation}
From Lemmas \ref{l:w>G} and \ref{lembourgain} we deduce that if \rf{eq300} holds for $\xi\in E$ and $r>0$, then, for $i=1,2$, 
\begin{equation}\label{eqkey300}
\omega_i^x(B(\xi,\delta_0^{-1}r)\gtrsim r^{n-1}\,G_i(x,y)\quad
\mbox{for all $x\in  \Omega_i\setminus B(\xi,2r)$ and all $y\in B(\xi,r)\cap \Omega$.}
\end{equation}

The next lemma follows by standard arguments. For completeness, we will
give full details.

\begin{lemma}\label{lemdob}
Let $a>1$. Let $\omega_1,\omega_2,\omega_3$ and $E$ be as in Theorem \ref{teo}.
There exists $b=b(a)>1$ such that for $\omega_i$-a.e.\ $\xi\in E$ there exists a sequence of
$\omega^j$-$(a,b)$-doubling balls $B(\xi,r_k)$ simultaneously for $j=1,2,3$, with $r_k\to0$. That is,
$$\omega_j(B(\xi,ar_k))\leq b\,\omega_j(B(\xi,r_k))\qquad \mbox{for $j=1,2,3$ and for all $k\geq1$.}$$ 
\end{lemma}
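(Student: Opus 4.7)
The plan is to reduce to a classical single-measure doubling lemma applied to $\mu:=\omega_1+\omega_2+\omega_3$, and then transfer the doubling conclusion to each $\omega_j$ via the mutual absolute continuity of the $\omega_j$ on $E$ and the Besicovitch differentiation theorem.

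First I would establish the single-measure version: for any Radon measure $\omega$ on $\R^{n+1}$ and any $a>1$, with $b:=2a^{n+1}$, the set
$$F=\{\xi:\exists\,r_0(\xi)>0\text{ with }\omega(B(\xi,ar))>b\,\omega(B(\xi,r))\text{ for all }r\in(0,r_0(\xi))\}$$
is $\omega$-null. Stratify $F=\bigcup_m F_m$ by imposing $r_0(\xi)\ge 1/m$ and $\omega(B(\xi,1/m))\le m$; iterating the bad inequality along $r_k=a^{-k}/m$ yields $\omega(B(\xi,r_k))\le b^{-k}\,\omega(B(\xi,1/m))$, which in view of $b=2a^{n+1}$ gives $\omega(B(\xi,r))=o(r^{n+1})$ as $r\to 0$, uniformly on $F_m$. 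A Vitali covering of $F_m$ by balls centered at $F_m$ of sufficiently small radius then forces $\omega(F_m)=0$, so $\omega(F)=0$. Applying this to the Radon measure $\mu$ with the given $a$ yields $b_0=b_0(a)>1$ such that, for $\mu$-a.e.\ $\xi$, there is a sequence $r_k\to 0$ (depending on $\xi$) with $\mu(B(\xi,ar_k))\le b_0\,\mu(B(\xi,r_k))$.

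Set $h_j:=d\omega_j/d\mu$ on $E$. Since $\omega_j\le\mu$ and the $\omega_j$ are mutually absolutely continuous on $E$, the zero set $\{h_j=0\}\cap E$ has zero $\omega_j$-measure, hence zero $\omega_i$-measure for all $i$, hence zero $\mu$-measure; so $h_j>0$ $\mu$-a.e.\ on $E$ for $j=1,2,3$. By the Besicovitch differentiation theorem for Radon measures in $\R^{n+1}$,
$$\frac{\omega_j(B(\xi,r))}{\mu(B(\xi,r))}\longrightarrow h_j(\xi)\in(0,\infty)\quad\text{as }r\to 0,\ j=1,2,3,$$
for $\mu$-a.e.\ $\xi\in E$. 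For each such $\xi$ there is $\rho_\xi>0$ with $\omega_j(B(\xi,r))\approx h_j(\xi)\,\mu(B(\xi,r))$ up to a factor $2$ whenever $r<\rho_\xi$. Combining this with the $\mu$-doubling at $r_k$, for every $k$ large enough that $ar_k<\rho_\xi$ one gets
$$\omega_j(B(\xi,ar_k))\le 2h_j(\xi)\,\mu(B(\xi,ar_k))\le 2b_0 h_j(\xi)\,\mu(B(\xi,r_k))\le 4b_0\,\omega_j(B(\xi,r_k))$$
uniformly in $j$, so $b:=4b_0$ works. Since $\omega_i\le\mu$, a $\mu$-null exceptional subset of $E$ is also $\omega_i$-null, yielding the lemma for $\omega_i$-a.e.\ $\xi\in E$ and each $i=1,2,3$.

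The only real obstacle is that running the single-measure argument separately for $\omega_1,\omega_2,\omega_3$ would produce three possibly different sequences of doubling radii at a given $\xi$, with no way to merge them. Passing through the combined measure $\mu$ produces a common sequence, and the Besicovitch density theorem then converts $\mu$-doubling into simultaneous $\omega_j$-doubling with a constant depending only on $a$ and $n$.
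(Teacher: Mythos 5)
Your proof is correct and follows essentially the same strategy as the paper: obtain a common sequence of doubling radii from the classical single-measure doubling lemma (with $b>a^{n+1}$) and then transfer the doubling property to all three measures via the Besicovitch/Lebesgue differentiation theorem, using mutual absolute continuity on $E$ to guarantee the densities are positive and finite. The only difference is cosmetic: you take $\mu=\omega_1+\omega_2+\omega_3$ as the reference measure (which avoids having to restrict to $E$ when differentiating, at the cost of a harmless factor in $b$), whereas the paper uses $\omega_1$ itself and compares the ratios $\omega_1(B(\xi,ar))/\omega_1(B(\xi,r))$ and $\omega_j(B(\xi,ar))/\omega_j(B(\xi,r))$ directly.
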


\begin{proof}
It is well know that, given any Radon measure $\mu$ in $\R^{n+1}$,
 if we choose $b>a^{n+1}>1$, then for $\mu$-a.e. $\xi\in\R^{n+1}$ there
 exists a sequence of $\mu$-$(a,b)$-doubling balls $B(\xi,r_k)$, with 
 $r_k\to0$. See for example Chapter 2 of \cite{Tolsa-llibre}. Applying this to $\omega_1$, we infer that for $\omega_1$-a.e.
 $\xi\in E$ there
 exists a sequence of $\omega_1$-$(a,b)$-doubling balls $B(\xi,r_k)$ with 
 $r_k\to0$.

We claim now for $\omega_1$-a.e.\ $\xi\in E$ and $j=2,3$,
\begin{equation}\label{claim400}
\lim_{r\to0} \frac{\omega_1(B(\xi,ar))}{\omega_1(B(\xi,r))}\,\frac{\omega_j(B(\xi,r))}{\omega_j(B(\xi,ar))} = 1.
\end{equation}
To check this, note that by the Lebesgue differentiation theorem,
taking also into account the mutual absolute continuity of $\omega_j$ with
$\omega_1$ in $E$,
$$\lim_{r\to0} \frac{\omega_1(B(\xi,ar))}{\omega_1(B(\xi,r))}\,\frac{\omega_j(B(\xi,r))}{\omega_j(B(\xi,ar))} =
\lim_{r\to0} \frac{\omega_1(B(\xi,ar)\cap E)}{\omega_1(B(\xi,r)\cap E)}\,\frac{\omega_j(B(\xi,r)\cap E)}{\omega_j(B(\xi,ar)\cap E)}
\quad \mbox{ for $\omega_1$-a.e. $\xi\in E$.}$$
Let $h_{j,1}$ be the density functions such that
$$\omega_j|_E = h_{j,1}\,\omega_1|_E,$$
so that  $h_{j,1}\in L^1(\omega_1|_E)$ and $0<h_{j,1}<\infty$ $\omega_1$-a.e.
By the Lebesgue differentiation theorem again, we have
$$
\lim_{r\to0} \frac{\omega_1(B(\xi,ar)\cap E)}{\omega_j(B(\xi,ar)\cap E)}
= \frac1{h_{j,1}(\xi)}
%$$
\quad \mbox{ and }\quad
\lim_{r\to0} \frac{\omega_j(B(\xi,r)\cap E)}
{\omega_1(B(\xi,r)\cap E)}= h_{j,1}(\xi)$$
for a.e. $\xi\in E$, and thus our claim \rf{claim400} holds.

Now we deduce that if $\xi\in E$ is a point such that \rf{claim400} holds
and $B(\xi,r_k)$ is a sequence of $\omega_1$-$(a,b)$-doubling balls with
$r_k\to0$, then
$$\limsup_{k\to\infty} \frac{\omega_j(B(\xi,ar_k))}{\omega_j(B(\xi,r_k))}
= \limsup_{k\to\infty} \frac{\omega_1(B(\xi,ar_k))}{\omega_1(B(\xi,r_k))} \leq b,$$
which proves the lemma.
\end{proof}

\subsection{Proof of the theorem} \label{subdd**}
We will give now the detailed arguments for the case $n\geq2$, and later we will sketch the required changes for the planar case $n=1$.

By standard arguments we may assume the domains $\Omega_i$ to be bounded.
We fix poles $p_i\in\Omega_i$ for the harmonic measures $\omega_i$, with $p_i$ deep inside $\Omega_i$, and for simplicity we write $\omega_i =\omega^{p_i}_i$.
We denote by $h_{i,j}$ the density function of $\omega_i$ with respect to $\omega_j$ on $E$. That is,
$$\omega_i|_E = h_{i,j}\,\omega_j|_E.$$
Let $\xi\in E$ be a Lebesgue point for $\chi_E$ and for all the density functions $h_{i,j}$ and so that there exists a decreasing
sequence of radii $r_k\to0$ satisfying the property described in Lemma \ref{lemdob}, for some constant
$a>2$ big enough to be chosen below.
We may assume that there exists an infinite subsequence of radii
such that
$$\HH^{n+1}(B(\xi,r_k)\cap \Omega_3) = \max_{1\leq i \leq3}\HH^{n+1}(B(\xi,r_k)\cap \Omega_i).$$
By renaming the subsequence $\{r_k\}_k$ if necessary, we assume that this holds for all $k\geq1$.

%Suppose that $\xi=0$ for simplicity. 
Denote $\B=B(0,1)$ and consider the affine map $T_k(x) = (x-\xi)/r_k$, so that $T_k(B(\xi,r_k))=\B$.
For $i=1,2,3$ and $k\geq1$ take the measures
$$\omega_i^k = \frac1{\omega_i(B(\xi,r_k))}\,T_k\#\omega_i.$$
Notice that
$$1=\omega_i^k(\B)\leq \omega_i^k(a\B) = \frac{\omega_i(B(\xi,ar_k))}{\omega_i(B(\xi,r_k))}\leq b,$$
where $a\B=B(0,a)$.
Hence there is a subsequence of radii $r_k$ so that
$\omega_i^k\to\omega_i^\infty$ weakly in $\frac a2\B$ as $k\to\infty$, for some Borel measure $\omega_i^\infty$ such that
$$1\leq \omega_i^\infty(\overline \B)\leq\omega_i^\infty(\tfrac a2 \B)\leq b$$
for $i=1,2,3$ and all $k\geq1$.

For $i=1,2,3$ and $k\geq 1$ consider now the functions
\begin{equation}\label{330}
u_i^k(x) = \frac{r_k^{n-1}}{\omega_i(B(\xi,r_k))}\,G_i(p_i,T_k^{-1}(x)),
\end{equation}
so that, for any $C^\infty$ compactly supported function $\vphi$, we have
\begin{align}\label{eq301}
\int \vphi\,d\omega_i^k &= \frac{1}{\omega_i(B(\xi,r_k))}\int \vphi\circ T_k\,d\omega_i =
\frac{1}{\omega_i(B(\xi,r_k))}\int \Delta(\vphi\circ T_k)\,G_i(p_i,x)\,dx \\
& = 
\frac{1}{r_k^2\,\omega_i(B(\xi,r_k))}\int \Delta\vphi(T_kx)\,G_i(p_i,x)\,dx = 
\frac{r_k^{n-1}}{\omega_i(B(\xi,r_k))}\int \Delta\vphi(y)\,G_i(p_i,T_k^{-1}y)\,dy\nonumber\\
&= \int \Delta\vphi\,u_i^k\,dy.\nonumber
\end{align}
Notice also that $u_i^k$ is a non-negative function which is harmonic in 
$a\B\cap T_k(\Omega_i)$. Further, for $i=1,2$, by \rf{eqkey300},
assuming $r_k$ small enough and choosing $a>\delta_0^{-1}$, for all $x\in \delta_0a\B\cap T_k(\Omega_i)$,
\begin{equation}\label{eq331}
u_i^k(x)\leq C(b).
\end{equation}

We suppose that all the points in the open sets $\Omega_i$ are Wiener regular for the Dirichlet problem (otherwise we may apply an approximation argument analogous the one in \cite{HMMTV}. See the end of this section for more details). Then the functions
$u_i^k$ extend continuously by zero in $a\B\setminus T_k(\Omega_i)$. We continue to denote by $u_i^k$ such extensions, which are subharmonic in $a\B$.
By Caccioppoli's inequality and the uniform boundedness of $u_i^k$ in $\delta_0a\B$ we deduce that, for $i=1,2$,
$$\|\nabla u_i^k\|_{L^{2}(\tfrac14 \delta_0a\B)}\lesssim \|u_i^k\|_{L^2(\tfrac12 \delta_0a\B)}\lesssim_b 1.$$
See (3.7) of \cite{KPT} for a similar argument.
By the Rellich-Kondrachov theorem, the unit ball of the Sobolev space $W^{1,2}(\tfrac14 \delta_0a\B)$ is relatively
compact in $L^2(\tfrac14 \delta_0a\B)$, and thus there exists a subsequence of the functions $u_i^k$ which
converges {\em strongly} in $L^2(\tfrac14 \delta_0a\B)$ to another function $u_i\in L^2(\tfrac14 \delta_0a\B)$.
Passing to a subsequence, we assume that the whole sequence of functions $u_i^k$ converges 
in $L^2(\tfrac14 \delta_0a\B)$ to $u_i\in L^2(\tfrac14 \delta_0a\B)$.
In particular, from \rf{eq301}, passing to the limit  if follows that
\begin{equation}\label{eq302}
\int \vphi\,d\omega_i^\infty = \int \Delta\vphi\,u_i\,dx,
\end{equation}
for any $C^\infty$ function $\vphi$ compactly supported in $\tfrac14\delta_0a\B$.

Consider the function
$$u(x)= u_1(x) - u_2(x).$$
Note that, by \rf{eq302},
$$\int \Delta\vphi\,u\,dx = \int \Delta\vphi\,u_1\,dx - \int \Delta\vphi\,u_2\,dx =
\int \vphi\,d\omega_1^\infty - \int \vphi\,d\omega_2^\infty.$$
We claim that
\begin{equation}\label{eq310}
\omega_1^\infty =\omega_2^\infty\quad  \mbox{ in $\frac12a\B$.}
\end{equation}
We defer the detailed (and standard) arguments to the end of the proof.
Assuming \rf{eq310} for the moment, we deduce that $\Delta u=0$ in $\tfrac14\delta_0a\B$ in the sense of distributions, and hence also in the classical sense
(because $u\in L^2(\tfrac14\delta_0a\B)$).

Let us check that $u$ does not vanish identically in $\tfrac14\delta_0a\B$.
Since the domains $\Omega_1$ and $\Omega_2$
are disjoint, it follows that
$$\|u_1^k - u_2^k\|_{L^2(\tfrac14\delta_0a\B)} \geq 
\|u_1^k\|_{L^2(\tfrac14\delta_0a\B)} \gtrsim \|u_1^k\|_{L^1(\tfrac14\delta_0a\B)},$$
which is bounded from below by \rf{eq302}. To see this, just take a bump function $\vphi$ identically $1$ on $\overline \B$ and supported on $2\B\subset \tfrac14\delta_0a\B$ (assume $a\geq 4\delta_0^{-1}$).
Hence, by the convergence of $u_1^k -  u_2^k$
in $L^2(\tfrac14\delta_0a\B)$ we also deduce that 
$$\|u_1 - u_2\|_{L^2(\tfrac14\delta_0a\B)} \neq0.$$

Next we intend to get a contradiction by showing that $u$ vanishes in a set of positive Lebesgue measure in $\B\subset\tfrac14\delta_0a\B$, which is impossible because the zero set of any harmonic function is a real analytic variety.

Recall that, by \rf{eq300a}, there exists a set $F_k\subset B(\xi,r_k)\setminus(\Omega_1\cup\Omega_2)$
such that
$\HH^{n+1}(F_k)\gtrsim r_k^{n+1}$.
Hence, denoting $G_k=T_k(F_k)$, we infer that 
$$\int_{G_k} |u_1^k - u_2^k|\,dx=0.$$
We may assume that $\chi_{G_k}$ converges weakly in $L^2(\B)$ to some non-negative function
$g\in L^2(\B)$.  Clearly, $\|g\|_{L^2(\B)}\lesssim1$ and
$$\int_\B g\, dx = \langle \chi_\B,g\rangle =
\lim_k \langle \chi_\B, \chi_{G_k}\rangle \gtrsim 1.$$
Also, by the strong convergence of $|u_1^k - u_2^k|$ and the 
weak convergence of $\chi_{G_k}$,
$$\int |u_1 - u_2|\,g\,dx = 
\lim_k \int |u_1^k - u_2^k|\,\chi_{G_k}\,dx =0,$$
which implies that $u_1 -u_2$ vanishes on a set of positive Lebesgue measure in $\B$ and provides the 
aforementioned contradiction.
\vv

It remains now to prove that $\omega_1^\infty =\omega_2^\infty$ in $\frac12a\B$. To this end, consider
a continuous  function $\vphi$ compactly supported in $\frac12\B$. Denote $\vphi_k=\vphi\circ T_k$.
For $i=1,2$ and all $k$ we write
\begin{align*}
\int \vphi \,d\omega_i^k & = \frac1{\omega_i^k(B(\xi,r_k))}\int \vphi_k\,d\omega_i \\
& =
\frac1{\omega_i^k(B(\xi,r_k))}\int_{E\cap B(\xi,ar_k)} \vphi_k\,d\omega_i +
\frac1{\omega_i^k(B(\xi,r_k))}\int_{B(\xi,ar_k)\setminus E} \vphi_k\,d\omega_i =: A_i^k+ B_i^k.
\end{align*}
Note that
$$|B_i^k| \leq \|\vphi_k\|_\infty\,\frac{\omega_i^k(B(\xi,ar_k)\setminus E)}{\omega_i^k(B(\xi,ar_k))}
\,\frac{\omega_i^k(B(\xi,ar_k))}{\omega_i^k(B(\xi,r_k))}\leq b\|\vphi_k\|_\infty\,\frac{\omega_i^k(B(\xi,ar_k)\setminus E)}{\omega_i^k(B(\xi,ar_k))}\to0,$$
as $k\to\infty$.
Hence to prove \rf{eq310} it suffices to show that
\begin{equation}\label{eqfac300}
\lim_k\frac1{\omega_1^k(B(\xi,r_k))}\int_{E\cap B(\xi,ar_k)} \vphi_k\,d\omega_1 = \lim_k\frac1{\omega_2^k(B(\xi,r_k))}\int_{E\cap B(\xi,ar_k)} \vphi_k\,d\omega_2.
\end{equation}  
To this end, we set
$$\frac1{\omega_2^k(B(\xi,r_k))}\int_{E\cap B(\xi,ar_k)} \vphi_k\,d\omega_2 = 
\frac{\omega_1^k(B(\xi,r_k))}{\omega_2^k(B(\xi,r_k))} \cdot\frac1{\omega_1^k(B(\xi,r_k))}\int_{E\cap B(\xi,ar_k)} \vphi_k\,h_{2,1}\,d\omega_1.$$
Since
$$\frac{\omega_1^k(B(\xi,r_k))}{\omega_2^k(B(\xi,r_k))} \to \frac1{h_{2,1}(\xi)},$$
we just have to check that
$$h_{2,1}(\xi)\,\lim_k\frac1{\omega_1^k(B(\xi,r_k))}\int_{E\cap B(\xi,ar_k)} \vphi_k\,d\omega_1 = \lim_k\frac1{\omega_1^k(B(\xi,r_k))}\int_{E\cap B(\xi,ar_k)} \vphi_k\,h_{2,1}\,d\omega_1.$$
This identity follows from the fact that $\xi$ is a Lebesgue point for $h_{2,1}$:
\begin{align*}
\biggl|\frac{h_{2,1}(\xi)}{\omega_1^k(B(\xi,r_k))} \int_{E\cap B(\xi,ar_k)} &\vphi_k\,d\omega_1 -\frac1{\omega_1^k(B(\xi,r_k))}\int_{E\cap B(\xi,ar_k)} \vphi_k\,h_{2,1}\,d\omega_1\biggr| \\
& \leq \frac 1{\omega_1^k(B(\xi,r_k))}\int_{B(\xi,ar_k)} |\vphi_k(x)|\,|h_{2,1}(\xi)-h_{2,1}(x)|\,d\omega_2(x)\\ & \leq \|\vphi\|_\infty \frac b{\omega_1^k(B(\xi,ar_k))}\int_{B(\xi,ar_k)} |h_{2,1}(\xi)-h_{2,1}(x)|\,d\omega_2(x) \to 0,
\end{align*}
as $k\to\infty$.
\fiproof
\vvv

\subsection{The case when $\Omega_i$ is not Wiener regular for some $i=1,2,3$}

Given any $\ve>0$, for each $i=1,2,3$ there exists a closed set $F_i$ such that ${\rm Cap}(F_i)<\ve$ and so that $\wt\Omega_i = \Omega_i\setminus F_i$
is Wiener regular (recall that $\rm Cap$ stands for the Newtonian capacity). For the detailed arguments the reader may consult Section 4 of \cite{HMMTV} (although we suspect that this result was known long before). Then, by the maximum principle (see Section 4 of \cite{HMMTV} for the precise justification), for any set $G\subset 
\partial\Omega_i \setminus F_i$, denoting by $\wt\omega_i$ the harmonic
measure of $\wt\Omega_i$ with respect to $p_i\in\Omega_i\cap\wt\Omega_i$,
$$\wt \omega_i(G)\leq \omega_i(G),$$
and, in particular, $$\wt \omega_i|_{\partial \Omega\setminus F_i}
\ll \omega_i|_{\partial\Omega\setminus F_i}.$$
One then easily deduces that, if $\ve$ is chosen small enough, then
there exists $\wt E\subset E$ such that $\wt \omega_i\approx \omega_i$
on $\wt E$ and $\wt\omega_i(\wt E)>0$ for each $i=1,2,3$ (we leave the details for the reader).
So we can apply the arguments above to the open sets $\wt\Omega_i$ and to
$\wt E$ to derive a contradiction.

\vvv
\subsection{The planar case $n=1$} 
We recall that in this case there are already purely analytic proofs by Bishop \cite{Bishop-arkiv} and Eremenko, Fuglede and Sodin \cite{EFS1}.
Because of this reason, we will only sketch the required changes
to adapt the proof of Subsection \ref{subdd**} to the planar case. 

The main reason why 
the arguments have to be modified is that Lemma \ref{l:w>G} does not hold for $n=1$, as far as we know. As shown in \cite[Subsection 4.4, close to (4.18)]{AHM3TV}, a reasonable substitute
of this lemma is provided by the following estimate:
\begin{align}\label{eq410}
|G(y,p) - G(z,p)| & \lesssim \frac{\omega^p(B(x_0,2\delta^{-1}r))}{\inf_{z\in B(x,2r)\cap \Omega} \omega^z(B(x,2\delta^{-1}r))} \\
&\quad + 
\int_{ B(x_0,3r)} \left(\left|\log\frac{r}{|y-\zeta|}\right| +  \left|\log\frac{r}{|z-\zeta|}\right|\right) \,d\omega^p(\zeta),\nonumber
\end{align}
valid for $x_0\in\partial\Omega$, $0<r<\diam(\partial \Omega)$,
 $y,z\in B(x_0,r)$, and  $p$ far away in $\Omega$.

Instead of defining the functions $u_i^k$ as in \rf{330}, we set
$$u_i^k(x) = \frac{r_k^{n-1}}{\omega_i(B(\xi,r_k))}\,\bigl(G_i(p_i,T_k^{-1}(x)) - G_i(p_i,z_i^k)\bigr),$$
where $z_i^k$ is some fix point in $B(\xi,r_k)\cap\Omega_i$.
Then, using \rf{eq410}, choosing appropriately $z_i^k$, and applying Fubini, one can check that
\begin{equation}\label{eq445}
\|u_i\|_{L^2(\delta_0 a \B)}\lesssim 1,
\end{equation}
which replaces \rf{eq331}, that is no longer valid.
On the other hand, as in \rf{eq301}, in this situation we also have
$$\int \vphi\,d\omega_i^k =\int \Delta\vphi\,u_i^k\,dy.$$

Because of Caccioppoli's inequality, \rf{eq445}, and the Rellich-Kondrachov theorem,
we still have that $u_i^k$ converges strongly in $L^2(\frac14\delta_0 a \B)$ to some function $u_i$ for $i=1,2$. Then we also set
$$u= u_1-u_2,$$
which turns out to be harmonic, by the same arguments as in the case $n\geq2$. Also, by analogous arguments to the ones of that case, one can check that $u$ is not identically constant in $\frac14\delta_0 a\B$, and then
one gets a contradiction by showing that $u$ is constant in a subset of positive Lebesgue measure, which is not possible because $u$ is harmonic.

\vvv


\begin{thebibliography}{MMMTV2}

\bibitem[AHM$^3$TV]{AHM3TV}
J. Azzam, S. Hofmann, J.M. Martell, S. Mayboroda, M. Mourgoglou, X. Tolsa, and A. Volberg. {\em Rectifiability of harmonic measure.} Geom. Funct. Anal. (GAFA), 26(3) (2016), 703--728.


\bibitem[AMT]{AMT} J. Azzam, M. Mourgoglou and X. Tolsa. {\em Mutual absolute continuity of interior and exterior harmonic measure implies
rectifiability.} Preprint (2015). To appear in Comm. Pure Appl. Math.

\bibitem[AMTV]{AMTV} J. Azzam, M. Mourgoglou, X. Tolsa, and A. Volberg. {\em
On a two-phase problem for harmonic measure in general domains.} Preprint arXiv:1608.04022 (2016).

\bibitem[Bi]{Bishop-arkiv}
 C.~J.\ Bishop. {\em A characterization of {P}oissonian domains.} {Ark. Mat.}
  {29} (1991), no.~1, 1--24.
  
 \bibitem[Bo]{Bourgain}
J. Bourgain. {\em On the {H}ausdorff dimension of harmonic measure in higher
  dimension.} {Invent. Math.} {87} (1987), no.~3, 477--483.
  
\bibitem[CS]{Caffarelli-Salsa} L. Caffarelli and S. Salsa. \emph{A geometric approach to free boundary problems.} Graduate
Texts in Math. 64. Amr. Math. Soc. (2005).  
 
\bibitem[EFS1]{EFS1} 
A. Eremenko, B. Fuglede and M. Sodin. {\em On the Riesz charge of the
lower envelope of $\delta$-subharmonic functions.} Potential Analysis 1 (1992),
191--204.
 
\bibitem[EFS2]{EFS2} A. Eremenko, B. Fuglede and M. Sodin. {\em Harmonic measure for three disjoint domains in $R^n$,} in Linear and Complex Analysis Problem
Book 3 (V.P. Havin, N.K. Nikolski, eds.) (1994), 323--325.
  
\bibitem[HMMTV]{HMMTV} S. Hofmann, J.M. Martell, S. Mayboroda, X. Tolsa and A. Volberg. {\em
Absolute continuity between the surface measure and harmonic measure implies rectifiability}. Preprint 2015.  arXiv:1507.04409.

\bibitem[KPT]{KPT} C. Kenig, D. Preiss, T. Toro. {\em Boundary structure and size in terms of interior and exterior harmonic measure in higher dimensions}. J. Amer. Math. Soc. 22 (2009), no. 3, 771--796.

\bibitem[KT]{Kenig-Toro-annals} C. Kenig and T. Toro. {\em Free boundary regularity for harmonic measures and Poisson kernels,} Ann. of Math. 150 (1999) 369--454.


\bibitem[To]{Tolsa-llibre}
X.~Tolsa.
{\em Analytic capacity, the {C}auchy transform, and non-homogeneous
  {C}alder\'on-{Z}ygmund theory}. Volume 307 of {\em Progress in Mathematics}.
 Birkh\"auser Verlag, Basel, 2014.

\bibitem[Ts]{Tsirelson} B. Tsirelson. {\em Triple points: from non-Brownian
filtrations to harmonic measures.} Geom. Funct. Anal. (GAFA), Vol. 7 (1997) 1096-1142.

\end{thebibliography}
\end{document}